\newcommand{\Cen}{\mbox{\rm Cen}}
\newcommand{\inv}{^{-1}}
\newcommand{\Z}{{\mathbb Z}}
\newcommand{\Q}{{\mathbb Q}}
\newcommand{\R}{{\mathbb R}}
\newcommand{\C}{{\mathbb C}}
\newcommand{\HQ}{{\mathbb H}}
\newcommand{\GL}{{\rm GL}}
\newcommand{\SL}{{\rm SL}}
\newcommand{\matriz}[1]{\begin{array} #1 \end{array}}
\newcommand{\GEN}[1]{\langle #1 \rangle}
\newcommand{\quat}[2]{\left( \frac{#1}{#2} \right)}
\title{Subgroup separability in integral group rings}
\author{\'{A}ngel del R\'{\i}o}
\address{Departamento de Matem\'{a}ticas, Universidad de Murcia, Murcia 30100, Spain}
\email{adelrio@um.es}
\author{Manuel Ruiz Mar\'{\i}n}
\address{Departamento de M\'{e}todos Cuantitativos e Inform\'{a}ticos, Universidad Polit\'{e}cnica de Cartagena, C/ Real 3, 30201 Cartagena, Spain}
\email{manuel.ruiz@upct.es}
\author{Pavel Zalesski}
\address{Departamento de Matem\'{a}tica, Universidade de Bras\'{\i}lia, 70.910-900, Brasilia-DF, Brasil}
\email{pz@mat.unb.br}
\thanks{The first and second authors has been partially supported by the Ministerio de Ciencia y Tecnolog\'{\i}a of Spain and Fundaci\'{o}n S\'{e}neca of Murcia. The third author is partially supported by CNPq.}
\subjclass[2000]{$16S34$, $20C05$, $16U60$}
\date{}
\newtheorem{theorem}{Theorem}
\newtheorem{lemma}[theorem]{Lemma}
\newtheorem{proposition}[theorem]{Proposition}
\theoremstyle{remark}
\theoremstyle{remark}
\begin{document}

\begin{abstract}
We give a list of finite groups containing all finite groups
$G$ such that the group of units $\Z G^*$ of the integral group ring $\Z G$ is subgroup separable.
There are only two types of these groups $G$ for which we cannot decide wether $ZG^*$ is subgroup
separable, namely the central product $Q_8 Y D_8$ and $Q_8\times C_p \mbox{ with } p \text{ prime and } p\equiv -1 \mod
(8)$.
\end{abstract}

\maketitle


A group  $\Gamma$ is said to be subgroup separable if for every
finitely generated subgroup $H$ of $\Gamma$ and  $g\in
\Gamma\setminus H$ there exists a subgroup of finite index $K$ of
$\Gamma$ such that $g\not\in KH$. In other words $\Gamma$ is
subgroup separable if every finitely generated subgroup of
$\Gamma$ is closed in the profinite topology of $\Gamma$ (i.e. the
topology generated by normal subgroups of finite index). The
importance of subgroup separability have long been recognized,
both in group theory and topology. This powerful property has
attracted a good deal of attention in the last few years, largely
motivated by questions which arise in low dimensional topology
(see \cite{scott}, and \cite{ALR} for example). The
first author who observed the importance of the subgroup separability
property was Mal'cev: he noticed that a subgroup separable
finitely presented group has solvable generalized word problem. It
is clear that subgroup separability of a group indicates that
its profinite topology  is strong. For arithmetic groups the
meaning of the profinite topology being strong is defined
concretely by means of the congruence subgroup property. It is
known that the congruence subgroup property for non-polycyclic
arithmetic groups implies non subgroup separability.

 There are few examples of
non-abelian groups that are known  to be subgroup separable. We
give a list of arithmetic groups known to have this property, since
it is relevant to the subject of this paper.  M. Hall \cite{hall}
provided the first non-trivial examples by proving that free
groups are subgroup separable. R. G. Burns \cite{burns} and N. S.
Romanovskii \cite{roma} showed that a free product of subgroup
separable groups is subgroup separable. These results were all
proved using algebraic methods. A more topological approach was
developed by J. Hempel in \cite{hempel}, J. R. Stallings in
\cite{stall} and P. Scott in \cite{scott}. Scott used hyperbolic
geometry to prove that surface groups are subgroup separable. More
recently, D. Long and A. Reid \cite{reid} adapted Scott's approach
to show that geometrically finite subgroups of certain hyperbolic
Coxeter groups are subgroup separable. In fact a combination of
the Agol, Long and Reid results \cite{A,ALR} proves
subgroup separability of Bianchi groups (see Theorem 3.4
in\cite{LR}) and so for all non-uniform arithmetic lattices.

In this paper we consider the problem of classifying  finite
groups $G$ such that $\Z G^*$, the group of units of the integral
group ring $\Z G$, is subgroup separable. To this end, we first
prove that $\Z G^*$ is subgroup separable if and only if the
simple components of the rational group algebra $\Q G$ satisfy
some special conditions. To classify the finite groups $G$ with
such rational group algebra we use firstly some representation
theory techniques and secondly some results of Jespers and Leal
\cite{JL1,JL2} and Gow and Huppert \cite{GH,GH2} on simple
components of rational group algebras. Throughout the paper we
will need to compute the Wedderburn decomposition of $\Q G$ for
some finite groups $G$. The reader can check these computations
using a method introduced in \cite{ORS} or the GAP package
Wedderga \cite{GAP,wedderga}.

\bigskip


We start introducing the basic notation.

The group of units of a ring $R$ is denoted $R^*$.
We will use $\zeta_n$ to denote a complex primitive $n$-th root of unity.

The commutator subgroup of a group $G$ is denoted $G'$. If $x,y\in G$ then $x^y=y\inv xy$ and $(x,y)=x\inv y\inv xy$.
The cyclic group of order $n$ is denoted $C_n$. We also use $\GEN{x}_n$ to denote a cyclic group of order $n$ generated by $x$. By $D_{2n}$ we denote the dihedral
group of order $2n$ and by $Q_{4n}$ the quaternion
group of order $4n$. The following finite groups
will  play an important role in the paper:
    $$\matriz{{lll}
    D_{2^{n+2}}^{+} &=&\GEN{a}_{2^{n+1}}\rtimes \GEN{b}_2, \text{ with } ba=a^{2^n + 1}b;\\
    D_{2^{n+2}}^{-} &=&\GEN{a}_{2^{n+1}}\rtimes \GEN{b}_2, \text{ with } ba=a^{2^n - 1}b; \\
    \mathcal{D}&=&\GEN{a,b,c | ca=ac,\; cb=bc,\; a^{2}= b^{2}= c^{4}=1,\; ba=c^{2}ab}; \\
    \mathcal{D}^{+}&=&\GEN{a,b,c | ca=ac,\; cb=bc,\; a^{4}=b^{2}=c^{4}=1,\; ba=ca^{3}b}.}$$
We also need the central product $D_8YQ_{2^n}$ of $D_8$ and $Q_{2^n}$, i.e. $D_8YQ_{2^n}=(D_8\times Q_{2^n})/\GEN{(z_1,z_2)}$, where $z_1$ and $z_2$ are generators of the center of $D_8$ and $Q_{2^n}$ respectively.
Recall that a non-abelian group $G$ is said to be Hamiltonian if every subgroup of $G$ is normal in $G$. The finite Hamiltonian groups are the groups of the form $Q_8\times C_2^n \times A$ with $A$ a finite abelian group of odd order \cite[5.3.7]{R}.

If $F$ is a field and $a,b$ are non-zero elements of $F$ then $\quat{a,b}{F}$ denotes the quaternion algebra $F[i,j|i^2=a,j^2=b,ji=-ij]$. The Hamiltonian quaternion algebra $\quat{-1,-1}{F}$ is denoted $\HQ(F)$. Recall that a quaternion algebra $\quat{a,b}{F}$ over a number field $F$ is totally definite if $F$ is a totally real field such that $a,b$ are totally negative (i.e. $\sigma(F)\subseteq \R$ and $\sigma(a)$ and $\sigma(b)$ are negative for every homomorphism $\sigma:F\rightarrow \C$).

Let $A$ be a finite dimensional semisimple rational algebra and $R$ a order in $A$.
Hence $A\cong A_1\times \dots \times A_n$ with $A_1,\dots,A_n$ simple algebras. Such an expression is called the Wedderburn decomposition of $A$ and the factors $A_i$ are called the simple components of $A$.
The following Wedderburn decompositions can be found in \cite[p.161-163]{CR}, \cite[Lemma~20.4]{seh-book2} or \cite{JL1}:
    \begin{equation}\label{WedderburnDecomposition}
    \matriz{{rcl}
    \Q C_n &\cong& \oplus_{d|n} \Q(\zeta_d), \\
    \Q D_{2n} &\cong& 2 \Q (D_{2n}/D'_{2n}) \oplus \oplus_{2< d|n} M_2(\Q(\zeta_d+\zeta_d\inv)), \\
    \Q Q_{2^n} &\cong& \Q D_{2^{n-1}} \oplus \HQ(\Q(\zeta_{2^{n-1}}+\zeta_{2^{n-1}}\inv)),\\
    \Q D_{16}^- &\cong& 4\Q \oplus M_2(\Q) \oplus M_2(\Q(\sqrt{-2})), \\
    \Q D_{16}^+ &\cong& 4\Q\oplus 2\Q(i) \oplus M_2(\Q(i)),\\
    \Q \mathcal{D} &\cong& 8\Q\oplus M_2(\Q(i)),\\
    \Q \mathcal{D}^+ &\cong& 4\Q\oplus 2\Q(i)\oplus 2M_2(\Q)\oplus 2M_2(\Q(i)), \\
    \Q(D_8YQ_8) & \cong & 16\Q \oplus M_2(\HQ(\Q)).}
    \end{equation}

By an order in $A$ we mean a $\Z$-order in $A$, i.e. a subring of $A$ with finitely generated underlying additive group and containing a basis of $A$ over $\Q$.
It is well known that if $R$ and $S$ are orders in $A$ then $R^*\cap S^*$ has finite index in both $R^*$ and $S^*$ (see e.g. \cite[Lemmas~4.2~and~4.6]{seh-book2}).

We say that $A$ is {\em virtually central (VC)} if the center of $R^*$, for $R$ an order in $A$, has finite index in $R^*$. This definition does not depend on the choice of the order.
If $A$ is simple then $A$ is VC if and only if it is either a field or a totally definite quaternion algebra \cite[Lemma~21.3]{seh-book2}. Therefore, in general, $A$ is VC if and only if all its simple components are fields or totally definite quaternion algebras.


We now recall some elementary properties of subgroup separability.
It is easy to see that abelian groups are subgroup separable and
that the class of subgroup separable groups is closed for
subgroups. Moreover, if $\Lambda$ is a subgroup of
finite index in $\Gamma$ and $\Lambda$ is subgroup separable then
$\Gamma$ is subgroup separable. This implies that if $R$ and $S$
are orders in a finite dimensional semisimple rational algebra
then $R^*$ is subgroup separable if and only if so is $S^*$. If
$\Gamma$ is a subgroup separable group and $\Omega$ is a finitely
generated abelian group then it is known that $\Gamma\times
\Omega$ is subgroup separable (see e.g. \cite[Lemma~4]{MR}).
However, subgroup separability fails to be preserved by many
natural operations. For instance, if $F$ is a non-abelian free
group then $F\times F$ is not subgroup separable. So the class of
subgroup separable groups is not closed under direct products.

\bigskip

The following proposition links subgroup separability of $\Z G^*$ with the Wedderburn decomposition of $\Q G$.

\begin{proposition}\label{Proposition}
Let $G$ be a finite group. Then $\Z G^*$ is subgroup separable if and only if one of the following conditions holds:
 \begin{enumerate}
 \item $\Q G$ is VC.
 \item $\Q G$ has exactly one non-VC simple component $A$ and if $R$ is an (any) order in $A$ then $R^*$ is subgroup separable.
 \end{enumerate}
 \end{proposition}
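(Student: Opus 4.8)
The plan is to reduce the whole statement to the unit group of a conveniently chosen order, and then to play two facts against each other: the unit group of an order in a VC algebra is virtually abelian (hence subgroup separable), whereas the unit group of an order in a \emph{non}-VC simple algebra contains a non-abelian free group. Concretely, I would fix the Wedderburn decomposition $\Q G\cong A_1\times\cdots\times A_n$ and work with a product order $R=R_1\times\cdots\times R_n$, where each $R_i$ is an order in $A_i$, so that $R^*\cong R_1^*\times\cdots\times R_n^*$. Since $\Z G$ and $R$ are both orders in $\Q G$, the remarks recalled before the statement give that $\Z G^*$ is subgroup separable if and only if $R^*$ is; thus it suffices to analyse $R^*$, and moreover each direct factor $R_i^*$ may be treated separately.

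For the sufficiency direction, suppose first that (1) holds, i.e. $\Q G$ is VC. Then the centre of $R^*$ has finite index in $R^*$; being abelian it is subgroup separable, and since subgroup separability passes up from a subgroup of finite index, $R^*$ is subgroup separable. Suppose instead that (2) holds, and write $\Q G\cong A\times B$ with $A$ the unique non-VC component and $B$ the product of the VC components. Taking $R=R_A\times R_B$, the factor $R_A^*$ is subgroup separable by hypothesis, while $R_B^*$ is virtually abelian because $B$ is VC. Choosing a finitely generated abelian subgroup $\Omega$ of finite index in $R_B^*$ (unit groups of orders are finitely generated), the group $R_A^*\times\Omega$ is subgroup separable, since a subgroup separable group times a finitely generated abelian group is subgroup separable; as $R_A^*\times\Omega$ has finite index in $R^*$, we conclude that $R^*$ is subgroup separable.

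For the necessity direction, assume $R^*$ is subgroup separable. Because subgroup separability is inherited by subgroups, every direct factor $R_i^*$ is subgroup separable; in particular, if some $A_i$ is non-VC, then the unit group of its order is subgroup separable, which is precisely the extra requirement appearing in (2). It remains only to rule out the existence of two distinct non-VC components. Here I would invoke the key input that the unit group of an order in a non-VC simple algebra contains a non-abelian free group $F$: if $A_i$ and $A_j$ were both non-VC, then $R_i^*\times R_j^*$, a subgroup of $R^*$, would contain a copy of $F\times F$, which is not subgroup separable, contradicting that subgroups of subgroup separable groups are subgroup separable. Hence at most one simple component is non-VC, which yields alternative (1) when there is none and alternative (2) when there is exactly one.

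The main obstacle is exactly that last input, namely that a non-VC simple component forces a non-abelian free subgroup in the corresponding unit group; once this is available, the equivalence follows formally from the elementary permanence properties of subgroup separability recalled above together with the VC dichotomy for simple algebras. I would therefore either isolate this free-subgroup statement as a separate lemma or cite the relevant results on units of orders, and the rest of the argument is the bookkeeping of the product order described here.
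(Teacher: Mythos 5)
Your proposal is correct and follows essentially the same route as the paper: reduce to a product order $R=R_1\times\cdots\times R_n$, prove sufficiency via a finite-index subgroup of the form (subgroup separable)$\times$(finitely generated abelian), and prove necessity by noting that two non-VC components would force a copy of $F\times F$, which is not subgroup separable. The single input you leave as a black box --- that the unit group of an order in a non-VC simple algebra contains a non-abelian free group --- is exactly what the paper supplies by combining the Tits alternative (each $R_i^*$ is either virtually solvable or contains a non-abelian free group) with Kleinert's theorem that a virtually solvable unit group forces the component to be VC.
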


\begin{proof}
Let $\Q G = A_1\times \cdots \times A_n$ be the Wedderburn decomposition of $\Q G$ and let $R_i$ be an order in $A_i$. As both $\Z G$ and $R=R_1\times \dots \times R_n$ are orders in $\Q G$, it follows that $\Z G^*$ is subgroup separable if and only if so is $R^*$.

If condition 1 holds then $R^*$ contains a finitely generated abelian  subgroup of finite index.  If condition 2 holds and $A_1$ is the only non-VC simple component of $\Q G$ then $R_1^*$ is subgroup separable and $R_2^*\times \dots \times R_n^*$ has a finitely generated abelian subgroup $H$ of finite index. Thus $R_1^*\times H$ is a subgroup separable subgroup of finite index in $R^*$. In both cases $R^*$ is subgroup separable and hence so is $\Z G^*$.

Conversely, assume that $\Z G^*$ is subgroup separable. Then $R^*$ is subgroup separable and hence so is each $R_i^*$.
By Tits Alternative each $R_i^*$ is either virtually  solvable or contains a non-abelian free group. Since the direct product of two non-abelian free groups is not subgroup separable, the number of $R_i^*$'s which are not virtually solvable is at most 1. If $R_i^*$ is virtually solvable then $A_i$ is VC \cite[Theorem 2]{kleinert}. Therefore $\Q G$ has at most one non-VC simple component.
\end{proof}

Observe that the class of finite groups $G$ such that $\Z G^*$ is subgroup separable is closed under subgroups and epimorphic images.
The first is an obvious consequence of the fact that the class of subgroup separable groups is closed under subgroups and the second is a consequence of Proposition~\ref{Proposition}. We will use this throughout without specific mention.

Let $A=M_n(D)$ with $D$ a finite dimensional division rational
algebra and $R$ an order in $D$. Then the group of units of an
order in $A$ is subgroup separable if and only if so is
$\GL_n(R)$. Moreover, $\GL_n(R)$ contains a subgroup of finite
index of the form $H\times K$ where $H$ is a subgroup of finite
index in the center of $R^*$ and $K$ is a subgroup of finite index
in $\SL_n(R)$. Therefore $\GL_n(R)$ is subgroup separable if and
only if so is $\SL_n(R)$. This and Proposition~\ref{Proposition}
imply that it is relevant to consider the problem of when
$\SL_n(R)$ is subgroup separable for $R$ an order in a finite
dimensional rational division algebra $D$. This is, in general, a
difficult problem with many known negative results and few
positive ones. Most of the negative results follow from the fact
that if $\SL_n(R)$ is subgroup separable then it does not have the
Congruence Subgroup Property. In particular, if $\SL_n(R)$ is
subgroup separable then $n\leq 2$ and if $n=2$ then $D$ is either
$\Q$, an imaginary quadratic extension of the $\Q$ or a totally
definite quaternion algebra over $\Q$ (see Main Theorem on page 74
in \cite{ra2} and also 5.6 of \cite{PR} for a short proof written
for fields that is valid for division algebras as well). This proves
the following lemma.

\begin{lemma}\label{ComponenteNoVC}
Let $G$ be a finite group such that $\Z G^*$ is subgroup separable and $A$ a non-VC simple component of $\Q G$. Then $A$ is either a division algebra or isomorphic to $M_2(D)$ with $D$  either $\Q$, an imaginary quadratic extension of $\Q$ or a totally definite quaternion algebra over $\Q$.
\end{lemma}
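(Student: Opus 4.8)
The plan is to represent $A$ via the Wedderburn--Artin theorem as a matrix algebra over a division algebra, reduce the subgroup separability of the relevant unit group to that of a special linear group, and then invoke the congruence-subgroup obstruction quoted just before the statement.

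First I would use Proposition~\ref{Proposition}. Since $\Z G^*$ is subgroup separable and $A$ is non-VC, condition (1) of the proposition cannot hold, so we are in case (2): $A$ is the unique non-VC simple component and, for any order $R$ of $A$, the group $R^*$ is subgroup separable. By the Wedderburn--Artin theorem there is a finite dimensional rational division algebra $D$ and an integer $n$ with $A\cong M_n(D)$. Fixing an order $S$ of $D$, the ring $M_n(S)$ is an order of $A$ whose unit group is $\GL_n(S)$, and hence $\GL_n(S)$ is subgroup separable.

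Next I would carry out the reduction to $\SL_n$ explained above. As noted there, $\GL_n(S)$ has a finite index subgroup of the form $H\times K$, with $H$ of finite index in the (finitely generated abelian) center of $S^*$ and $K$ of finite index in $\SL_n(S)$. Since subgroup separability passes to and from finite index subgroups, and since a direct product with a finitely generated abelian group preserves it, subgroup separability of $\GL_n(S)$ is equivalent to that of $\SL_n(S)$. Thus $\SL_n(S)$ is subgroup separable.

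The decisive step is then the congruence-subgroup obstruction. A subgroup separable $\SL_n(S)$ cannot have the Congruence Subgroup Property, and the classification of arithmetic groups for which CSP fails (the Main Theorem of \cite{ra2}, with the field case also in 5.6 of \cite{PR}) forces $n\leq 2$ and, when $n=2$, restricts $D$ to $\Q$, an imaginary quadratic extension of $\Q$, or a totally definite quaternion algebra over $\Q$. Reading off the two possibilities finishes the argument: if $n=1$ then $A\cong D$ is a division algebra, while if $n=2$ then $A\cong M_2(D)$ with $D$ of the stated form. I expect this final step to be the only genuine obstacle, as it rests entirely on the deep cited results about the congruence subgroup problem rather than on anything established within the paper itself.
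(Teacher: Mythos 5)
Your proposal is correct and follows essentially the same route as the paper, whose proof is precisely the discussion preceding the lemma: subgroup separability of $\Z G^*$ passes to the unit group of an order $M_n(S)$ in $A\cong M_n(D)$, the $H\times K$ finite-index decomposition reduces $\GL_n(S)$ to $\SL_n(S)$, and the congruence subgroup results of \cite{ra2} and \cite{PR} force $n\leq 2$ with the stated restrictions on $D$ when $n=2$. No gaps.
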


We say that a group $G$ is decomposable if it is the direct product of two non-trivial subgroups. Otherwise we say that $G$ is indecomposable. Proposition~\ref{Proposition} and Lemma~\ref{ComponenteNoVC} implies strong conditions for finite decomposable groups $G$ such that $\Z G^*$ is subgroup separable.

\begin{lemma}\label{Indecomposable}
If $G$ is a finite non-abelian decomposable group such that $\Z G^*$ is subgroup separable then one of the following conditions holds:
\begin{enumerate}
\item $G\cong Q_8\times C_2^k$ for some $k\ge 1$.
\item $G\cong Q_8\times C_n$, with $n$ either 3, 4 or prime satisfying $n\equiv -1 \mod(8)$.
\end{enumerate}
\end{lemma}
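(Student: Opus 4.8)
The plan is to strip off abelian direct factors until only $Q_8$ survives, using the tensor decomposition $\Q G\cong\Q G_1\otimes_{\Q}\Q G_2$ of the rational group algebra of a direct product, so that the simple components of $\Q G$ are exactly the simple components of the algebras $A\otimes_{\Q}B$ as $A$ runs over the simple components of $\Q G_1$ and $B$ over those of $\Q G_2$. Throughout I would read off concrete Wedderburn decompositions from \eqref{WedderburnDecomposition} and keep Proposition~\ref{Proposition} (at most one non-VC component) and Lemma~\ref{ComponenteNoVC} (that component is a division algebra or $M_2$ of $\Q$, an imaginary quadratic field, or a totally definite quaternion algebra over $\Q$) as the two obstructions to be met.

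Write $G=G_1\times G_2$ with both factors non-trivial. First I would prove that each $\Q G_i$ is VC. If $\Q G_1$ had a non-VC simple component $A$, then, since $|G_2|>1$, the algebra $\Q G_2$ has the trivial component $\Q$ and at least one further non-trivial component $B$; tensoring $A$ with $\Q$ and with $B$ yields two distinct non-VC components of $\Q G$, because a non-VC simple algebra stays non-VC after any extension of its centre (it can become neither a field nor a totally definite quaternion algebra, the latter requiring a totally real centre and definiteness at every infinite place, both of which fail after enlarging the centre). This contradicts Proposition~\ref{Proposition}, so both $\Q G_1$ and $\Q G_2$ are VC. Next I would show that at most one factor is non-abelian: a non-abelian VC algebra has a non-commutative simple component, necessarily a totally definite quaternion algebra $D_i$, and if both factors were non-abelian then $D_1\otimes_{\Q}D_2$ would contribute a component of the shape $M_4(\text{field})$ or $M_2(\text{quaternion})$ with totally real centre (for instance $\HQ(\Q)\otimes_{\Q}\HQ(\Q)\cong M_4(\Q)$), none of which is permitted by Lemma~\ref{ComponenteNoVC}. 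Hence exactly one factor, say $G_1$, is non-abelian, and $G=G_1\times A$ with $A$ abelian and non-trivial.

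The crux is to identify $G_1$. Since $\Q G_1$ is VC and non-abelian, every non-linear irreducible character of $G_1$ has degree $2$, Schur index $2$ and totally real character field, and $\Q G_1$ has no component $M_2(F)$. I would use this to force $G_1\cong Q_8\times C_2^m$: an odd prime dividing $|G_1|$ would create a non-real cyclotomic character field (hence a non-VC component), while every non-abelian $2$-group other than $Q_8\times C_2^m$ admits a quotient such as $D_8$, $Q_{16}$, or an extraspecial group of order $32$, whose rational algebra already contains a forbidden matrix component ($M_2(\Q)$, $M_2(\Q(\sqrt{-2}))$, or $M_2(\HQ(\Q))$ as in \eqref{WedderburnDecomposition}). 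This classification of the finite groups whose rational group algebra is VC is the step I expect to be the main obstacle, since it is genuinely representation-theoretic rather than a single computation. Absorbing the factor $C_2^m$ into $A$, we reduce to $G=Q_8\times A$, with $\Q G\cong(4\Q\oplus\HQ(\Q))\otimes_{\Q}\Q A$.

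Finally I would pin down $A$. The only components of $\Q G$ that can fail to be VC are the algebras $\HQ(\Q)\otimes_{\Q}\Q(\zeta_d)\cong\HQ(\Q(\zeta_d))$ arising from the cyclotomic components $\Q(\zeta_d)$ of $\Q A$, and since $\Q(\zeta_d)$ is totally real only for $d\le 2$, each cyclic subgroup of $A$ of order at least $3$ contributes one such non-VC component. Proposition~\ref{Proposition} allows at most one, which forces $A$ to have a unique cyclic subgroup of order $\ge 3$; an elementary check then leaves only $A\cong C_2^k$, $A\cong C_4$, or $A\cong C_p$ with $p$ an odd prime. If $A\cong C_2^k$ then $\Q G$ is VC and we obtain case~(1); if $A\cong C_3$ or $A\cong C_4$ the unique non-VC component is $\HQ(\Q(\zeta_3))\cong M_2(\Q(\zeta_3))$ or $\HQ(\Q(i))\cong M_2(\Q(i))$, which is $M_2$ of an imaginary quadratic field and hence admissible; and if $A\cong C_p$ with $p\ge 5$ the centre $\Q(\zeta_p)$ has degree $\ge 4$, so the component cannot be of the $M_2(D)$ shape allowed by Lemma~\ref{ComponenteNoVC} and must be a division algebra. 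Whether $\HQ(\Q(\zeta_p))$ is a division algebra is settled by a local computation at the prime $2$ (the infinite ramification of $\HQ(\Q)$ being killed by the totally complex field $\Q(\zeta_p)$), and this is the point at which the congruence condition $p\equiv -1\pmod 8$ appears, completing case~(2).
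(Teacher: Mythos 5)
Your overall architecture --- decompose $\Q G\cong \Q G_1\otimes_{\Q}\Q G_2$, propagate non-VC components through tensoring, reduce to $G=Q_8\times A$, and then constrain $A$ by deciding which $\HQ(\Q(\zeta_d))$ are VC --- runs parallel to the paper's proof, and your endgame for $A$ is essentially the paper's divisor count. But two steps have genuine gaps. In ruling out two non-abelian factors you claim that a component of $D_1\otimes_{\Q}D_2$ of the shape ``$M_2$(quaternion) with totally real centre'' is not permitted by Lemma~\ref{ComponenteNoVC}; that is false as stated, since Lemma~\ref{ComponenteNoVC} explicitly allows $M_2(D)$ with $D$ a totally definite quaternion algebra over $\Q$ (a shape that really occurs, for $D_8YQ_8$). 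To close the step you must observe that the quaternion part produced by $D_1\otimes D_2$ is split at \emph{every} real place of the totally real composite centre, because both $D_i$ are ramified there and Brauer classes add; hence it is indefinite, never totally definite, and only then does Lemma~\ref{ComponenteNoVC} exclude it. The paper sidesteps this entirely: two non-abelian (hence Hamiltonian) factors give a subgroup $Q_8\times Q_8$, whose rational group algebra has the component $\HQ(\Q)\otimes_{\Q}\HQ(\Q)\cong M_4(\Q)$, excluded by Lemma~\ref{ComponenteNoVC} together with closure of the class under subgroups.

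The step you yourself flag as the main obstacle --- non-abelian $G_1$ with $\Q G_1$ VC implies $G_1\cong Q_8\times C_2^m$ --- is left unproved, and the sketch you offer for it is incorrect: it is not true that every non-abelian $2$-group other than $Q_8\times C_2^m$ surjects onto $D_8$, $Q_{16}$ or an extraspecial group of order $32$. The group $D_{16}^+$ (the modular group of order $16$) is a counterexample, since its unique quotient of order $8$ is abelian; it is excluded only because $\Q D_{16}^+$ itself has the matrix component $M_2(\Q(i))$, as in (\ref{WedderburnDecomposition}). The missing idea --- and the paper's actual route --- is classical: VC forces every simple component of $\Q G_1$ to be a division algebra (totally definite quaternion algebras are division algebras), so all idempotents of $\Q G_1$ are central, so every subgroup of $G_1$ is normal, i.e.\ $G_1$ is Hamiltonian; the structure theorem \cite[5.3.7]{R} then gives $G_1\cong Q_8\times C_2^m\times A$ with $A$ odd abelian, and $A=1$ because $\HQ(\Q(\zeta_d))$ has non-totally-real centre for $d>2$. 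With these two repairs your route and the paper's essentially coincide. One final caution: your ``local computation at the prime $2$'' does not by itself produce the congruence. It shows $\HQ(\Q(\zeta_p))$ is a division algebra exactly when the order of $2$ modulo $p$ is odd (the completion above $2$ is the unramified extension of $\Q_2$ of that degree, and odd-degree extensions preserve nonsplitness); this settles $p\equiv -1\pmod 8$ (division) and $p\equiv\pm 3\pmod 8$ (split), but leaves $p\equiv 1\pmod 8$ undecided by the congruence class alone, which is why the paper invokes the splitting statement cited from \cite{Lam} at this point rather than a bare local computation.
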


\begin{proof}
Assume that $G=H\times K$ with $H$ non-trivial and $K$ non-abelian. We claim that $K$ is Hamiltonian. Otherwise one of the simple components of $\Q K$ is not a division algebra and so, by Lemma~\ref{ComponenteNoVC}, it is of the form $M_2(D)$ for $D$ a division algebra. As $\Q H$ has at least two simple components, $\Q G$ has at least two simple components which are not division algebras, and hence they are not VC. This contradicts Proposition~\ref{Proposition} and finishes the proof of the claim.

If $H$ is non-abelian then it is also Hamiltonian, by the previous paragraph. Then $G$ contains a subgroup isomorphic to $Q_8\times Q_8$.
As $\Q(Q_8\times Q_8)$ has a simple component isomorphic to $\HQ(\Q)\otimes_{\Q}\HQ(\Q)\cong M_4(\Q)$, the group $\Z(Q_8\times Q_8)^*$ is not subgroup separable, by Lemma~\ref{ComponenteNoVC}. This yields to a contradiction. Therefore $H$ is abelian.

Let $n>1$. Then $\Q(Q_8\times C_n)$ has a simple component isomorphic to $\HQ(\Q(\zeta_d))$, for every divisor $d$ of $n$. This algebra is VC if and only if $d=1$ or $2$. Therefore, if $\Z(Q_8\times C_n)^*$ is subgroup separable then $n$ has at most one divisor different from $1$ or $2$ and hence $n$ is either 4 or prime. The same argument shows that if $\Z(\Q_8\times C_n \times C_m)^*$ is subgroup separable with $n$ and $m$ different of 1 then $n=m=2$.

This implies that $K\cong Q_8\times A$ with $A$ an elementary abelian 2-group, and $H$ is either elementary abelian 2-group or cyclic of order 4 or prime. Moreover, if $A\ne 1$ then $H$ is elementary abelian 2-group. Thus either $G$ satisfies condition 2 or $G=Q_8\times C_n$ with $n=4$ or an odd prime. Assume that $G=Q_8\times C_n$ with $n$ odd prime. Then one of the simple components of $\Q G$ is isomorphic to $\HQ(\Q(\zeta_n))$. If moreover $n\not\equiv -1 \mod (8)$ then $\HQ(\Q(\zeta_n))\cong M_2(\Q(\zeta_n))$ (see e.g. the paragraph below \cite[Proposition~2.11]{Lam}). By Lemma~\ref{ComponenteNoVC}, $n-1=[\Q(\zeta_n):\Q]\le 2$ and hence $n=3$. This finishes the proof.
\end{proof}

By Lemma~\ref{ComponenteNoVC}, if $\Z G^*$ is subgroup separable then every simple component of $\Q G$ is either a division algebra or a two-by-two matrix ring over a division algebra. The simple components of this form, for $G$ a nilpotent group, have been classified in \cite{JL1}. We will use this in our next lemma.

\begin{lemma}\label{SimpleComponents}
Let $G$ be a non-abelian nilpotent finite group. Let $e$ be a primitive central idempotent of $\Q G$ such that $(\Q G)e$ is not abelian. If $\Z G^*$ is subgroup separable then one of the following facts holds:
 \begin{enumerate}
 \item $Ge\cong Q_8\times C_3$ and $(\Q G)e=M_2(\Q (\sqrt{-3}))$.
 \item $Ge\cong Q_8\times C_p$ with $p$ prime satisfying $p\equiv -1\; mod(8)$ and $(\Q G)e=\HQ\left(\Q\left(\zeta_p\right)\right)$.
\item $Ge\cong D_8$ and $(\Q G)e=M_2(\Q)$.
\item $Ge\cong Q_8$ and $(\Q G)e=\HQ(\Q)$.
\item $Ge\cong Q_{16}$ and $(\Q G)e=\HQ(\Q(\sqrt{2}))$.
\item $Ge\cong D_{16}^+$ and $(\Q G)e=M_2(\Q(i))$.
\item $Ge\cong D$ and $(\Q G)e=M_2(\Q(i))$.
\item $Ge\cong D_8YQ_8$ and $(\Q G)e=M_2(\HQ(\Q))$.
 \end{enumerate}
\end{lemma}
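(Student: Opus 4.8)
The plan is to reduce the problem to a finite search using the structure theory of nilpotent groups together with the constraints already established. Since $G$ is nilpotent, it is the direct product of its Sylow subgroups, and a primitive central idempotent $e$ of $\Q G$ factors accordingly; the non-abelian factor $(\Q G)e$ is then controlled by a single Sylow subgroup, say the $p$-Sylow $G_p$, with $e$ restricting to a primitive central idempotent $e_p$ of $\Q G_p$ such that $(\Q G_p)e_p$ is non-abelian. I would first argue that we may assume $p=2$ for the \emph{source} of non-commutativity: if an odd $p$-Sylow contributed a non-abelian simple component, that component would be a division algebra by the classification of Jespers--Leal \cite{JL1} (odd $p$-groups give division algebras, not matrix rings), and then cross-multiplying with any other non-trivial factor would produce a second non-division (hence non-VC) simple component, contradicting Proposition~\ref{Proposition} via Lemma~\ref{ComponenteNoVC}. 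This isolates the relevant $2$-group $Q := G_2$ on which $(\Q Q)f$ is non-abelian for the corresponding idempotent $f$.

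Next I would invoke the Jespers--Leal classification of the $2$-groups $Q$ for which $\Q Q$ has \emph{only} simple components that are division algebras or $2\times 2$ matrix rings over division algebras, which (by Lemma~\ref{ComponenteNoVC}) is forced here. This yields a finite list of admissible non-abelian $2$-groups $Q$ — precisely those whose faithful-image quotients are among $Q_8, D_8, Q_{16}, D_{16}^+, \mathcal{D}, D_8YQ_8$ and their abelian-by-these extensions. For each such $Q$ I would read off the non-abelian simple components from the Wedderburn decompositions recorded in \eqref{WedderburnDecomposition}, thereby obtaining cases (3)--(8) with $Ge\cong Q$ and the stated algebra, after checking that each listed matrix/quaternion algebra indeed satisfies the hypothesis of Lemma~\ref{ComponenteNoVC} (e.g.\ $\HQ(\Q(\sqrt 2))$ is totally definite, $M_2(\Q(i))$ has a division part that is an imaginary quadratic field). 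Cases (1) and (2) then arise from allowing a single non-trivial odd part: tensoring the $2$-part $\HQ(\Q)$ with $\Q C_p$ produces $\HQ(\Q(\zeta_d))$ for $d\mid p$, and Lemma~\ref{Indecomposable} already restricts $p$ to $3$ or to primes with $p\equiv -1 \pmod 8$, giving $M_2(\Q(\sqrt{-3}))$ when $p=3$ (since $\HQ(\Q(\sqrt{-3}))$ splits) and the totally definite $\HQ(\Q(\zeta_p))$ when $p\equiv-1\pmod 8$.

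The main obstacle I anticipate is \emph{bookkeeping the faithful component}: the idempotent $e$ selects a particular simple component, and I must match $Ge$ (the image of $G$ under the representation $\rho_e$, i.e.\ $G$ modulo the kernel of the action on $(\Q G)e$) with a concrete group on the list rather than with $G$ itself. Concretely, the delicate step is verifying that whenever $(\Q G)e$ is one of the eight algebras, the quotient $Ge$ is \emph{exactly} the indicated group and not merely a group having that algebra as \emph{some} component; this requires checking, using the Wedderburn data in \eqref{WedderburnDecomposition}, that each target algebra occurs as a \emph{faithful} simple component of precisely the listed group. I would handle this by confirming for each candidate that the corresponding central idempotent is faithful (the representation has trivial kernel) and that no smaller quotient already realizes the algebra, which for the small groups in \eqref{WedderburnDecomposition} is a direct inspection. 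A secondary, purely arithmetic point — distinguishing when $\HQ(\Q(\zeta_p))$ is split versus totally definite according to $p \bmod 8$ — is settled by the local-splitting criterion cited in Lemma~\ref{Indecomposable}, so it poses no real difficulty.
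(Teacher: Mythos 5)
Your plan has one factual error and, more importantly, omits the step that constitutes the bulk of the paper's actual proof. The claim that odd $p$-groups ``give division algebras, not matrix rings'' is backwards: by Roquette's theorem (which is what the paper cites here, not \cite{JL1}), the Schur indices of irreducible characters of $p$-groups with $p$ odd are trivial, so every non-abelian simple component of the rational group algebra of an odd $p$-group is $M_{p^k}(F)$ with $F$ a field and $k\ge 1$. That is precisely why the odd $p$-group case dies: $p^k\ge 3$ contradicts Lemma~\ref{ComponenteNoVC} directly, with no need for your ``cross-multiplying'' argument (which, as stated, would not even produce a contradiction, since a product of a division algebra with a field is a division algebra). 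For the mixed case the paper does not tensor Sylow contributions either: it notes that $Ge$, being nilpotent and not a $p$-group, is decomposable and $\Z (Ge)^*$ is subgroup separable, so Lemma~\ref{Indecomposable} applies to $Ge$ itself and cases 1 and 2 drop out at once — including the fact that the $2$-part must be $Q_8$, a point your tensoring argument simply assumes.

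The decisive gap is your treatment of \cite[Theorem~2.2]{JL1}. That theorem is not the finite list you assert; it contains all $Q_{2^n}$ (faithful component the division algebra $\HQ(\Q(\zeta_{2^{n-1}}+\zeta_{2^{n-1}}\inv))$), the groups $D_{16}$, $D_{16}^-$ and $\mathcal{D}^+$ in the $M_2(F)$ case, and — crucially — an infinite family in the case $(\Q G)e\cong M_2(D)$ with $D$ non-commutative, namely groups $Ge=\GEN{H,g}$ with $[Ge:H]=2$, $N$ normal in $H$, $N\cap N^g=1$ and $H/N\cong Q_{2^n}$ (this includes $D_8YQ_{2^n}$ and more). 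Cutting this down to the eight cases of the lemma is the real content of the proof, and none of it appears in your plan: for $Q_{2^n}$ one uses the quotient $D_{2^{n-1}}$, whose component $M_2(\Q(\zeta_{2^{n-1}}+\zeta_{2^{n-1}}\inv))$ forces $n\le 4$ via Lemma~\ref{ComponenteNoVC}; in the $M_2(F)$ case one inspects the decompositions in (\ref{WedderburnDecomposition}) to find two non-VC components in $\Q D_{16}$, $\Q D_{16}^-$ and $\Q\mathcal{D}^+$, contradicting Proposition~\ref{Proposition}; and in the non-commutative case the paper spends roughly half of its proof on an element-by-element analysis — first $n=3$, then $|N|=2$ (ruling out $|N|=8$ via a $Q_8\times Q_8$ subgroup whose algebra has an $M_4(\Q)$ component, and $|N|=4$ via a quotient $D_8\times C_2$ contradicting Lemma~\ref{Indecomposable}), then $H\cong Q_8\times C_2$ (eliminating $\GEN{c}_4\rtimes\GEN{d}_4$ by an automorphism argument), then that $g$ may be taken of order $4$ (using repeatedly that $D_8$ cannot be a quotient, since $M_2(\Q)$ together with $M_2(\HQ(\Q))$ would give two non-VC components), finally assembling $Ge=D_8YQ_8$. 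The obstacle you flag — bookkeeping which quotient realizes the faithful component — is comparatively routine; the genuinely hard step, showing that among the Jespers--Leal case-(3) groups only $D_8YQ_8$ can occur, is missing entirely from your proposal.
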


\begin{proof}
As $Ge$ is an epimorphic image of $G$, $\Z(Ge)^*$ is subgroup separable and $(\Q G)e$ is a simple component of $\Q G$ isomorphic to a simple component of $\Q(Ge)$. We separate cases depending on whether $Ge$ is a $p$-group or not. The $p$-group case is the most involved and it is split depending on whether $\Q Ge$ is a division algebra, a matrix algebra over a field or a matrix algebra over a non-commutative division algebra.

If $Ge$ is not a $p$-group, for some $p$ then, by Lemma~\ref{Indecomposable}, $Ge \cong Q_8\times C_p$ with $p$ prime and either $p=3$ or $p \equiv -1 \mod (8)$. In the first case $(\Q G)e \cong M_2(\Q(\sqrt{-3}))$ and in the second case $(\Q G)e\cong \HQ\left(\Q\left(\zeta_p\right)\right)$. Therefore if $Ge$ is not a $p$-group then either condition 1 or 2 holds.

Assume otherwise that $Ge$ is a $p$-group for some prime $p$. If $p$ is odd then, by a well known result of Roquette \cite{Roquette}, $(\Q G)e$ is an $n\times n$ matrix algebra over a field, for $n$ a power of $p$, contradicting Lemma~\ref{ComponenteNoVC}. Thus $Ge$ is a $2$-group and, by Lemma~\ref{ComponenteNoVC}, $(\Q G)e$ is either a division algebra or a 2-by-2 matrix algebra over a division algebra. Then $Ge$ and $(\Q G)e$ satisfy one of the conditions  of \cite[Theorem~2.2]{JL1}.

If $(\Q G)e$ is a division algebra then $Ge$ is isomorphic to $Q_{2^n}$ and $(\Q G)e=\HQ(\Q(\zeta_{2^{n-1}}+\zeta_{2^{n-1}}^{-1}))$. Then $D_{2^{n-1}}$ is an epimorphic image  of $Ge$. Hence $\Q G$ has a simple component isomorphic to $M_2(\Q(\zeta_{2^{n-1}}+\zeta_{2^{n-1}}^{-1}))$. (See the Wedderburn decompositions in (\ref{WedderburnDecomposition}). By Lemma~\ref{ComponenteNoVC} it follows that $\Q(\zeta_{2^{n-1}}+\zeta_{2^{n-1}}^{-1})=\Q$ and thus $n\leq 4$. Therefore, in this case either condition $4$ or $5$ holds.

Assume that $(\Q G)e\cong M_2(F)$ with $F$ a field. By \cite[Theorem 2.2]{JL1},  $Ge$ is isomorphic to one of the following groups: $D_8,D_{16},D_{16}^+,D_{16}^-, \mathcal{D}$ or $\mathcal{D}^+$. By inspection of the Wedderburn decomposition of these groups (\ref{WedderburnDecomposition}) we observe that $\Q D_{16}, \Q D_{16}^-, \Q \mathcal{D}$ and $\Q \mathcal{D}^+$ have at least two non-VC simple components, yielding to a contradiction with Proposition~\ref{Proposition}. We conclude that $Ge$ is either $D_8$, $D_{16}^+$ or $\mathcal{D}$. Then either condition 3, 6 or 7 holds.

 It remains to consider the case when $(\Q G)e\cong M_2(D)$ with $D$ a non-commutative division algebra. By \cite[Theorem 2.2]{JL1}, $D\cong \HQ(\Q(\zeta_{2^{n-1}}+\zeta_{2^{n-1}}\inv))$ and $G=\GEN{H,g}$, with $H$ a subgroup of index 2 in $G$, and $H$ contains a non-trivial normal subgroup $N$ such that $N\cap N^g = 1$ and $H/N\cong Q_{2^n}$. (Observe that case (3.a) in loc. cit. is in fact contained in case (3.b) because if $D_8=\GEN{a,b}$, with $a$ of order $4$, $H=\GEN{b,Q_{2^n}}$ and $N=\GEN{b}$ then $H$ and $N$ satisfy  condition (3.b) for $Ge=D_8YQ_{2^n}$.) By Lemma~\ref{ComponenteNoVC}, $\Q(\zeta_{2^{n-1}}+\zeta_{2^{n-1}}\inv)=\Q$ and hence $n=3$. Since $N\cap N^g=\{1\}$   and $N$ is a normal subgroup of $H$, $\GEN{N,N^g}=N\times N^g\subseteq H$ and $(N\times N^g)/N$ is a non-trivial subgroup of $H/N\cong Q_8$. Thus $N$ is isomorphic to a subgroup of $Q_8$ and therefore its order is either 2, 4 or 8. If $|N|=8$ then $N\times N^g$ is isomorphic to $Q_8\times Q_8$ and its rational group algebra contains a simple component isomorphic to $M_4(\Q)$ yielding to a contradiction with Lemma~\ref{Proposition}. Thus $N$ has order 2 or 4.
We claim that $N$ has order $2$. Otherwise $N$ is generated by an element of order $4$, since so is every subgroup of order 4 of $Q_8$. Then $H=\GEN{x,a,b}$ with $N=\GEN{x}_4$, $a=x^g$, $b^2N=a^2N$ and $a^bN=a\inv N$. As $N$ is normal in $H$ and $a$ is not central in $H$ we have $x^b=x\inv$ and $a^b=a\inv$. Moreover $b^2=a^2x^i$, with $i=0,1,2$ or $3$. As $a^2$ and $b^2$ are central in $H$ and $x$ is not, necessarily $i=0$ or $2$. In both cases $H/\GEN{b^2}$ is isomorphic to $D_8\times C_2$. This yields to a contradiction with Lemma~\ref{Indecomposable}.

Thus $N=\GEN{x}_2$, $|H|=16$, $H/N\cong Q_8$ and $G=\GEN{H,g}$ with $x^g\ne x$. This implies that $H$ is isomorphic to either $Q_8\times C_2$ or $\GEN{c}_4\rtimes \GEN{d}_4$, with $dc=c^3d$. Assume that $H$ is as in the second case. Then $H$ has 3 elements of order 2, namely $c^2, d^2$ and $c^2d^2$. Notice that $c^2$ is the only element of order 2 in $H'=\GEN{c^2}$ and $c^2d^2$ is the only non-square element of order 2 of $H$. This proves that $c^2,d^2$ and $c^2d_2$ are invariant by any automorphism of $H$. As $x$ is an element of order 2 of $H$ it follows that $x^g= x$, a contradiction.
Therefore $H\cong Q_8\times C_2$. This implies that for every $a,b\in H$ with $(a,b)\ne 1$ we have $H=\GEN{a,b}\times \GEN{x}$, $\GEN{a,b}\cong Q_8$ and $x^g=a^2x$.

In the remainder of the proof we will use that $D_8$ is not an epimorphic image of $G$. Otherwise $M_2(\Q)$ is a simple quotient of $\Q G$ and by assumption $M_2(\HQ (\Q))$ is another simple quotient of $\Q G$ yielding to a contradiction with Proposition~\ref{Proposition}.

We claim that the order of $g$ is either 2 or 4 and in fact we may assume that it is 4.
If $g$ is of order 8 then we may assume that $g^2=a$. Thus, $x^g=g^4x$ and therefore $\GEN{g,x}$ is a normal subgroup of $G$ isomorphic to $D_{16}^+$. Then $g^b=g^ix^j$ with $i=\pm 1$ or $\pm 3$ and $j=0$ or 1. Also $g^{-2}=a^{-1}=a^b=(g^2)^b=(g^ix^j)^2$. If $j=0$ then $g^{-2}=g^{2i}$. Therefore $i\equiv -1 \mod 4$ and thus $G/\GEN{a^2,x}$ is isomorphic to $D_8$, a contradiction. So $j=1$ and $g^{-2}=g^{6i}$. Therefore $i=1$ or $-3$. In this case $G/\GEN{xg^2}$ is isomorphic to $D_8$, again a contradiction. Then the order of $g$ is 2 or 4. If the order of $g$ is 2 then $gx$ has order $4$. Hence, we may assume that $g$ has order 4 as desired.

Thus in the remainder of the proof we assume that $g$ has order $4$. Then $g^2$ is an element of order 2 of $H$ which commutes with $g$ and hence $g^2=a^2$. The group $H$ has three abelian subgroups of order $8$, namely, $\GEN{a,x}, \GEN{b,x}$ and $\GEN{ab,x}$. If any of these groups is not fixed by the action of $g$ then we may assume that $a^g=b$ (changing $b$ by $a^g$ if needed). Then $(g,a)=b\inv a=ab$ and thus the quotient $G/\GEN{a^2,x}$ is a nonabelian group of order 8 generated by two elements of order 2. Hence $G/\GEN{a^2,x}\cong D_8$, a contradiction. So the action of $g$  fixes the three subgroups of order 8 in $H$. If $\GEN{a}$ is not normal in $G$ then $a^g=ax$ or $a^g=a^{-1}x$. Then $a=(a^g)^g$ is equal to either $(ax)^g=axa^2x=a^{-1}$ or $(a^{-1}x)^g=axa^2x=a^{-1}$, a contradiction. This proves that every cyclic subgroup of order 4 of $H$ is normal in $G$. Therefore, if $(a,g)\ne 1$ then $a^g=a\inv$ and hence $(ax)^g = ax$. Thus replacing $a$ by $ax$ if needed we may assume that $(g,a)=1$ and similarly, one may assume that $(g,b)=1$. Hence $G=\GEN{g,x}Y\GEN{a,b}=D_8YQ_8$ which finishes the proof of the lemma.
\end{proof}

\begin{theorem}\label{SS}
Let $G$ be a nonabelian finite  group such that $\Z G^*$ is subgroup separable. Then $G$ is either abelian or isomorphic to one of the
following groups:
$$D_6, D_8, Q_{12}, C_4\rtimes C_4, \mathcal{D}, D_{16}^+, Q_{16}, Q_8\times C_3, Q_8\times C_4, D_8YQ_8,$$
$$Q_8\times C_2^n \text{ with } n\ge 0, \text{ or }$$
$$Q_8\times C_p \mbox{ with $p$ prime and }p\equiv -1 \mod (8).$$
\end{theorem}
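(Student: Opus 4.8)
The plan is to reduce the statement to the three lemmas already established by splitting $G$ into three cases according to whether it is decomposable, indecomposable nilpotent, or indecomposable non-nilpotent, using throughout that the class of groups $G$ with $\Z G^*$ subgroup separable is closed under subgroups and epimorphic images.

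If $G$ is decomposable, Lemma~\ref{Indecomposable} already places $G$ among $Q_8\times C_2^k$ with $k\ge 1$, $Q_8\times C_3$, $Q_8\times C_4$ and $Q_8\times C_p$ with $p\equiv -1\bmod 8$, all of which appear in the list; so I would dispose of this case first and then assume $G$ indecomposable and non-abelian. Next I would treat the indecomposable nilpotent case. A nilpotent group is the direct product of its Sylow subgroups, so an indecomposable one is a $p$-group; if $p$ were odd, Roquette's theorem \cite{Roquette} (as used in Lemma~\ref{SimpleComponents}) forces a non-abelian simple component of $\Q G$ to be an $n\times n$ matrix algebra over a field with $n$ a power of $p\ge 3$, contradicting Lemma~\ref{ComponenteNoVC}. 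Hence $G$ is a $2$-group, and here I would split according to the center. If $Z(G)$ is cyclic then $G$ has a faithful irreducible character; the corresponding primitive central idempotent $e$ satisfies $G\cong Ge$ with $(\Q G)e$ non-abelian, so Lemma~\ref{SimpleComponents} gives $G\in\{Q_8,D_8,Q_{16},D_{16}^+,\mathcal{D},D_8YQ_8\}$. If $Z(G)$ is non-cyclic I would pick central involutions $z_1\ne z_2$ with $\GEN{z_1}\cap\GEN{z_2}=1$, so that $G$ embeds in $G/\GEN{z_1}\times G/\GEN{z_2}$, a product of two strictly smaller groups in the class; by induction on $|G|$ these are known, and a finite analysis of the admissible subdirect products, using indecomposability (and the fact that among the listed $2$-groups only $C_4\rtimes C_4$ has non-cyclic center), identifies $G\cong C_4\rtimes C_4$.

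Finally, the indecomposable non-nilpotent case is where the real work lies. First I would show that no prime $p\ge 5$ divides $|G|$: its Sylow subgroup is abelian by Roquette, and if it were central it would split off as a direct factor (contradicting indecomposability), while any non-central action produces, inside a suitable subquotient $C_p\rtimes C_2$, a simple component $M_2(\Q(\zeta_p+\zeta_p\inv))$ with $\Q(\zeta_p+\zeta_p\inv)\ne\Q$, excluded by Lemma~\ref{ComponenteNoVC}. The same circle of ideas forces the odd part to be exactly $C_3$, acted on by inversion, so $G=C_3\rtimes P$ with $P$ a non-trivial $2$-group; it then remains to prove $P\in\{C_2,C_4\}$, giving $G\cong D_6$ or $G\cong Q_{12}$.

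To finish this last step I would argue that any larger or non-cyclic $P$ produces either a forbidden epimorphic image (for instance $S_3\times C_2=D_{12}$, which is decomposable with a non-Hamiltonian factor and hence excluded via Lemma~\ref{Indecomposable}) or a second non-VC simple component of $\Q G$ (for example a $\Q(i)$-component arising alongside the $M_2(\Q)$ coming from the $D_6$-quotient as soon as $P$ admits a character of order $4$), in either case contradicting Proposition~\ref{Proposition}. I expect this elimination of all admissible sizes and structures of $P$ without an exhaustive search to be the main obstacle, and I would organize it by first bounding the exponent and the nilpotency structure of $P$ through its epimorphic images, and only then invoking the bound of at most one non-VC component.
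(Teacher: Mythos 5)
Your skeleton (decomposable case via Lemma~\ref{Indecomposable}, then indecomposable nilpotent, then non-nilpotent) matches the paper's, and your cyclic-center observation is correct and genuinely nice: a $2$-group with cyclic center has a faithful irreducible character, the corresponding rational component gives $G\cong Ge$, and Lemma~\ref{SimpleComponents} then lists $G$ directly, bypassing part of the paper's appeal to \cite{JL2} and \cite{JR}. But the non-cyclic-center branch has a genuine gap: the claim that induction plus ``a finite analysis of admissible subdirect products, using indecomposability'' identifies $G\cong C_4\rtimes C_4$ is false as stated. Take $G=\GEN{c}_4\rtimes\GEN{d}_8$ with $c^d=c\inv$, of order $32$. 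Its center is $\GEN{c^2,d^2}\cong C_2\times C_4$ (non-cyclic), its three central involutions are $c^2$, $d^4$, $c^2d^4$, and the corresponding quotients are $C_2\times C_8$ (abelian), $C_4\rtimes C_4$ and $D_{16}^+$ --- all admissible --- while $G$ is indecomposable (a decomposition would force $G\cong D_8\times C_4$, $Q_8\times C_4$ or $D_{16}^+\times C_2$, and none holds, e.g.\ the squares of $G$ generate $C_2\times C_4$, not a cyclic group). So the subdirect embedding, indecomposability and non-cyclicity of the center cannot single out $C_4\rtimes C_4$; to kill this $G$ you must count non-VC components of $\Q G$ itself ($M_2(\Q)$ from the $D_8$ quotient $G/\GEN{d^2}$ \emph{and} $M_2(\Q(i))$ from the $D_{16}^+$ quotient, violating Proposition~\ref{Proposition}). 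Carrying this out systematically is precisely the content of \cite[Theorem~1]{JL2} and \cite[Theorem~1.3]{JR} that the paper invokes; note also that the admissible quotients include the infinite family $Q_8\times C_2^n$, so even the finiteness of your case analysis needs an argument.

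The non-nilpotent case has a similar problem. The paper's entry point is the Gow--Huppert theorem \cite{GH,GH2}: reduced degrees in $\{1,2\}$ force a nilpotent subgroup of index $2$, giving $G=N_{2'}\rtimes G_2$ with $G_2$ acting on $N_{2'}$ through a group of order at most $2$. Your sketch silently assumes this structure (the odd part normal and acted on by an involution, so that $C_p\rtimes C_2$ subquotients exist), and it omits entirely the hardest step, namely that $P=G_2$ is cyclic; the paper proves this by a dedicated argument combining the abelian case (via direct decompositions contradicting indecomposability) with the nilpotent case applied to $G/G_2'$. Moreover your sample elimination mechanism is wrong: a simple component isomorphic to $\Q(i)$ is a field, hence VC, and produces no contradiction --- indeed $\Q Q_{12}$ itself has a $\Q(i)$ component alongside $M_2(\Q)$, since $Q_{12}=C_3\rtimes C_4$ already admits a linear character of order $4$, so your criterion ``$P$ admits a character of order $4$'' would wrongly exclude the admissible group $Q_{12}$. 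What actually excludes $C_3\rtimes C_8$ (the paper's computation) is the component $\quat{i,-3}{\Q(i)}$, a quaternion algebra that is non-VC because its center $\Q(i)$ is not totally real, occurring alongside $M_2(\Q)$. So both of your closing steps rest on an elimination scheme that is unexecuted and, in the one instance where you instantiate it, incorrect.
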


\begin{proof}
If $G$ is decomposable then, by Lemma~\ref{Indecomposable}, $G$ is isomorphic to either $Q_8\times C_2^n \text{ (with } n\ge 1),  Q_8\times C_3$, $Q_8\times C_4$ or $Q_8\times C_p$ with $p$ prime and $p\equiv -1 \mod 8$.
 So in the remainder of the proof we assume that $G$ is indecomposable. We consider cases depending on whether $G$ is nilpotent or not.

\emph{Assume that $G$ is nilpotent}.
Then, $G$ is a $p$-group, because it is indecomposable and, by Lemma~\ref{SimpleComponents}, $G$ is a $2$-group. Moreover, for every primitive central idempotent $e$ of $\Q G$ such that $Ge$ is not abelian, one of the conditions 3-8 of Lemma~\ref{SimpleComponents} holds. If $G$ is Hamiltonian then $G$ is isomorphic to $Q_8$.
Assume that $G$ is not Hamiltonian. If  $Q_{16}$ is not an epimorphic image of $G$ then, by Lemma~\ref{SimpleComponents}, every non-commutative simple quotient of $\Q G$ is isomorphic to either $M_2(\Q)$, $\HQ(\Q)$, $M_2(\Q(i))$ or $M_2(\HQ(\Q))$ and only one simple component is not a division algebra, by Proposition~\ref{Proposition}. The non-abelian finite groups $G$ satisfying this condition have been classified in \cite[Theorem 1]{JL2}. Using this result we deduce that $G$ is isomorphic to either $D_8,  C_4\rtimes C_4, \mathcal{D}, D_{16}^+$ or $D_8YQ_8$.

Assume otherwise that $Q_{16}$ is an epimorphic image of $G$. Then $D_8$ is also an epimorphic image of $G$ and therefore $M_2(\Q)$ is isomorphic to a simple component of $\Q G$. Then the remaining simple components of $\Q G$ are division algebras, by Proposition~\ref{Proposition}. By Lemma~\ref{SimpleComponents}, every simple quotient of $\Q G$ is isomorphic to either $M_2(\Q), \HQ (\Q)$ or $\HQ (\Q(\sqrt{2}))$. Then $G$ satisfies condition $(3)$ of \cite[Theorem~1.3]{JR}. Thus $G$ is one of the groups (a)-(g) listed in that result, because $G$ is non-abelian indecomposable 2-group and the groups (h) and (i) in the list are not 2-groups. The groups (a)-(f) have exponent 4, while the exponent of $G$ is at least 8 because $Q_{16}$ is an epimorphic image of $G$. Thus $G$ is isomorphic to the group $H_n$ given by the presentation $\GEN{x,y_1,\dots,y_n|x^4=x^2y_i^4=y_i^2[x,y_i]=[y_i,y_j]=1}$ for some $n\ge 1$. As  $H_n/\GEN{y_2^2,\dots,y_n^2}\cong Q_{16}\times C_2^{n-1}$, does not satisfies the conditions of Lemma~\ref{Indecomposable} if $n>1$, we deduce that $n=1$. We conclude that $G\cong Q_{16}$. This finishes the proof for the nilpotent case.

\emph{Assume that $G$ is non-nilpotent}.
By Proposition~\ref{Proposition}, every simple component of $\Q G$ is either a division algebra or a two-by-two matrix ring over a division algebra. In other words the reduced degree over $\Q$ of each irreducible character of $G$ is either 1 or 2. This implies that $G$ contains a nilpotent subgroup of index 2, by \cite{GH,GH2}. Hence $G=N_{2'}\rtimes G_2$ where $N_{2'}$ is a nilpotent $2'$-group and $G_2$ is a 2-group such that $N_2=\Cen_{G_2}(N_{2'})$ has index 2 in $G_2$. Therefore, there is a non-trivial automorphism $\sigma$ of $N_{2'}$ of order 2, such that for every $x\in G_2$, the action $\varphi_x$ of $x$ on $N_{2'}$ by conjugation is trivial if $x\in N_2$ and otherwise $\varphi_x=\sigma$.

We claim that $G_2$ is cyclic. Assume first that $G_2$ is abelian and write $G_2=\GEN{x_1}_{n_1}\times \dots \times \GEN{x_k}_{n_k}$ with $2\le n_1\le n_2\le \dots \le n_k$. Let $i$ be minimum with $x_i\not\in N_2$. By replacing, $x_j$ by $x_jx_i$, for each $j>i$ such that $x_j\not\in N$, we may assume that $x_j\in N_2$ for every $j\ne i$. Then $G=(N_{2'}\rtimes \GEN{x_i})\times \prod_{j\ne i} \GEN{x_j}$. As, by assumption, $G$ is indecomposable we deduce that $k=1$, as wanted. Assume otherwise that $G_2$ is non-abelian.  By the nilpotent case $G_2$ is one of the 2-groups listed in the theorem. On the other hand $G_2'$ is a normal subgroup of $G$ and $G/G_2' \cong N_{2'}\rtimes (G_2/G_2')$. By the abelian case, $G_2/G_2'$ is cyclic. This yields to a contradiction, since none of the 2-groups listed in the theorem satisfies this condition.

Hence $G_2=\GEN{x}$ for some $x$, of order $2^n$, say. Now we claim that every subgroup of $N_{2'}$ is normal in $G$. Otherwise there is $a\in N_{2'}$ of order $q$, an odd prime power, such that $b=a^x\not \in \GEN{a}$. This implies that $\GEN{ab,x^2}$ is contained in the center of $G$ and $\GEN{a,b,x}/\GEN{ab,x^2}$ is isomorphic to $D_{2q_1}$, for $q_1$ a divisor of $q$ different than $1$. However $\Q D_{2q_1}$ has a simple component isomorphic to $M_2(\Q(\zeta_{q_1}+\zeta_{q_1}\inv))$. This implies that $\Q(\zeta_{q_1}+\zeta_{q_1}\inv)=\Q$ and hence $q_1=3$. Thus $a^3=(ab)^i=a^ib^i$ for some integer $i$. Therefore $b^i=a^{3-i}$. As $b\not\in \GEN{a}$, we have $i=3m$ for some $m$. Then $a^{3(1-m)}=b^{3m}$. As $a$ and $b$ have the same order, $m$ is coprime with $3$. Thus $b^3\in \GEN{a^3}$. This implies that $\GEN{a^3,x^2}$ is normal in $G$ and $H=\GEN{a,b,x}/\GEN{a^3,x^2}\cong (\GEN{\overline{a}}_3\times \GEN{\overline{b}}_3) \times \GEN{\overline{x}}_2$. The Wedderburn decomposition of $\Q H$ is
\begin{equation*}
\Q H=2\Q \oplus2\Q(\sqrt{-3})\oplus M_2(\Q)\oplus M_2(\Q(\sqrt{-3})).
\end{equation*}
By Proposition~\ref{Proposition}, $\Z H^*$ is not subgroup separable, a contradiction. This finishes the proof of the claim.

Thus every subgroup of $N_{2'}$ is normal in $G$. Therefore, if $a\in N_{2'}$ is an element of order $q$ non-commuting with $x$, then $\GEN{a,x}/\GEN{x^2}\cong D_{2q}$. As in the previous paragraph this implies that $q=3$. Using that $G$ is indecomposable it is now easy to prove that $N_{2'}=C_3$.
Therefore $G=C_3\rtimes C_{2^n}$ with $a^x=a^{-1}$. If $n\geq 3$ then $K=C_3\rtimes C_8$ is isomorphic to an epimorphic image of $G$.
The Wedderburn decomposition of $\Q K$ is
\begin{equation*}
\Q K=2\Q \oplus \Q(i)\oplus\Q(\zeta_8)\oplus M_2(\Q)\oplus \left(\frac{-1,-3}{\Q}\right)\oplus \left(\frac{i,-3}{\Q(i)}\right).
\end{equation*}
By Proposition~\ref{Proposition}, $\Z K^*$ is not subgroup separable, a contradiction. Therefore $G$ is isomorphic to either $C_3\rtimes C_2\cong D_6$ or $C_3\rtimes C_4=Q_{12}$ which finishes the proof of the theorem.
\end{proof}

To obtain a complete classification of the finite groups $G$ such that $\Z G^*$ is subgroup separable one should decide which of the groups appearing in Theorem~\ref{SS} satisfy the conditions of Proposition~\ref{Proposition}. If $G=Q_8\times C_2^n$, with $n\ge 0$, then $\Z G^*$ is finite and hence $\Z G^*$ is subgroup separable. For the remaining groups in Theorem~\ref{SS}, $\Q G$ has precisely one non-VC component. The following table classify the groups appearing in Theorem~\ref{SS}, other than $Q_8\times C_2$, according to the non-VC component $A$. The third
column contains an order $R$ in the non-VC component.
    $$\matriz{{ccc}
    G & A & R \\\hline \\
    D_6, D_8, C_4\rtimes C_4, Q_{16} & M_2\left(\Q\right) & M_2\left(\Z\right) \\
    Q_8\times C_3 & M_2\left(\Q\left(\sqrt{-3}\right)\right) & M_2\left(\Z\left[\sqrt{-3}\right]\right) \\
    Q_8\times C_4, \mathcal{D}, D_{16}^+ & M_2\left(\Q\left(i\right)\right) & M_2\left(\Z[i]\right) \\
    D_8YQ_8 & M_2\left(\HQ\left(\Q\right)\right) & M_2\left(\HQ\left(\Z\right)\right)\\
    Q_8\times C_p, \mbox{ with } p \text{ prime and } p\equiv -1 \mod
    \left(8\right) & \HQ\left(\Q\left(\zeta_p\right)\right) & \HQ\left(\Z\left[\zeta_p\right]\right) \\\\\hline}$$
Let $G$ be
one of the groups in the previous table and let $R$ be the order displayed in the
third column of the table. By
Proposition~\ref{WedderburnDecomposition}, $\Z G^*$ is subgroup
separable if and only if $R^*$ is subgroup separable. This has
been settled for the groups in the first three rows. Indeed, it is
well known that $\GL_2(\Z)$ contains a non-abelian free subgroup
of finite index and it has been proved recently that
$\GL_2(\Z[\sqrt{-3}])$ and $\GL_2(\Z[i])$ are subgroup separable
(see \cite[Theorem~3.4]{LR}). So we have the following positive
result.

\begin{theorem}
If $G$ is one of the following groups
$$D_6, D_8, Q_{12}, C_4\rtimes C_4, \mathcal{D}, D_{16}^+, Q_{16}, Q_8\times C_3, Q_8\times C_4 \text{ or } Q_8\times C_2^n \text{ (with } n\ge 0) $$
then $\Z G^*$ is subgroup separable.
\end{theorem}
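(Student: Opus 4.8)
The plan is to check, group by group, that each $G$ in the list satisfies one of the two alternatives of Proposition~\ref{Proposition}, and then to quote the known subgroup separability of the three relevant arithmetic groups. First I would treat the infinite family $G=Q_8\times C_2^n$: since $\Q Q_8\cong 4\Q\oplus\HQ(\Q)$, tensoring with $\Q C_2^n\cong 2^n\Q$ shows that every simple component of $\Q G$ is isomorphic to $\Q$ or to the totally definite quaternion algebra $\HQ(\Q)$. Hence $\Q G$ is VC and alternative (1) of Proposition~\ref{Proposition} applies. (Equivalently, $\Z G^*$ is finite for these groups, and finite groups are trivially subgroup separable.)

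For each of the remaining groups the strategy is to produce the Wedderburn decomposition of $\Q G$ --- reading it off \eqref{WedderburnDecomposition} when listed there and computing it directly otherwise --- and to verify that $\Q G$ has exactly one non-VC simple component $A$, namely the one recorded in the table preceding the statement, with the displayed order $R\subseteq A$. Thus $A\cong M_2(\Q)$, with $R=M_2(\Z)$, for $D_6,D_8,Q_{12},C_4\rtimes C_4,Q_{16}$; $A\cong M_2(\Q(\sqrt{-3}))$, with $R=M_2(\Z[\sqrt{-3}])$, for $Q_8\times C_3$; and $A\cong M_2(\Q(i))$, with $R=M_2(\Z[i])$, for $Q_8\times C_4,\mathcal{D},D_{16}^+$. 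Most of these are immediate, but two points deserve care. For $Q_8\times C_4$ one uses that $\HQ(\Q(i))\cong M_2(\Q(i))$ because $-1$ is a square in $\Q(i)$, so that the quaternionic tensor factor collapses to the matrix component while the factors $\HQ(\Q)$ stay VC. For $Q_{12}$ the faithful two-dimensional representation does \emph{not} give a matrix algebra: it yields the totally definite (hence VC) quaternion division algebra $\quat{-1,-3}{\Q}$, so the unique non-VC component is the $M_2(\Q)$ coming from the quotient $Q_{12}/Z(Q_{12})\cong D_6$. In every case Proposition~\ref{Proposition} then reduces the theorem to the subgroup separability of $R^*$.

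It remains to supply the arithmetic input, for which the orders in the table are chosen so that $R^*$ is exactly a general linear group. For $A=M_2(\Q)$ we have $R^*=\GL_2(\Z)$, which contains a nonabelian free subgroup of finite index; since free groups are subgroup separable by M.\ Hall's theorem \cite{hall} and subgroup separability passes up along finite-index inclusions (as recalled in the excerpt), $\GL_2(\Z)$ is subgroup separable. For $A=M_2(\Q(\sqrt{-3}))$ and $A=M_2(\Q(i))$ we have $R^*=\GL_2(\Z[\sqrt{-3}])$ and $R^*=\GL_2(\Z[i])$ respectively, which are subgroup separable by \cite[Theorem~3.4]{LR}. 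Combined with the reduction of the previous paragraph this proves the theorem.

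The main obstacle is essentially external to our argument: the subgroup separability of $\GL_2(\Z[i])$ and $\GL_2(\Z[\sqrt{-3}])$ is a deep fact about Bianchi groups resting on the work of Agol, Long and Reid, which we import as a black box from \cite{LR}; by contrast $\GL_2(\Z)$ is elementary, being virtually free. Internally, the only delicate part is the Wedderburn bookkeeping --- distinguishing the quaternionic components that are totally definite (and therefore VC) from the genuine matrix component that remains the unique non-VC factor --- with $Q_{12}$ being the representative example where this distinction is not obvious.
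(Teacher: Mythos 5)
Your proof is correct and takes essentially the same route as the paper: reduce via Proposition~\ref{Proposition} to the unit group of an order in the unique non-VC simple component (noting that $\Q(Q_8\times C_2^n)$ is VC, equivalently $\Z G^*$ is finite, for the infinite family), then import that $\GL_2(\Z)$ is virtually free and that $\GL_2(\Z[i])$ and $\GL_2(\Z[\sqrt{-3}])$ are subgroup separable by \cite[Theorem~3.4]{LR}. Your explicit Wedderburn bookkeeping for $Q_{12}$ (the totally definite $\quat{-1,-3}{\Q}$ versus the $M_2(\Q)$ from the quotient $D_6$) and for $Q_8\times C_4$ (splitting $\HQ(\Q(i))\cong M_2(\Q(i))$) is a welcome extra, since the paper's table silently omits $Q_{12}$ from its first row.
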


To decide whether $\Z G^*$ is subgroup separable or not for $G$
one of the groups in the last two rows of the table one should
decide whether $R^*$ is subgroup separable. A presentation by
generators and relations for $\SL_2(\HQ(\Q))$ has been obtained in
\cite{ALM}. However the subgroup separability question for this
groups does not seem to follow from the presentation.  As far as
we know there is very little known about the structure of the
group of units of $\HQ(\Z[\zeta_p])$, for $p$ prime with $p\equiv
-1 \mod (8)$ and it is not known whether this group is subgroup
separable or not.

Thus to complete the classification of finite
groups $G$ with $\Z G^*$ subgroup separable it remains to decide
if $\GL_2\left(\HQ(\Z)\right)$ is subgroup separable and for which prime
integers $p$ with $p\equiv -1 \mod (8)$, the group of units of
$\HQ\left(\Z\left[\zeta_p\right]\right)$ is subgroup separable.
In fact $\GL_2\left(\HQ(\Z)\right)$ is subgroup separable if and only if so is $\SL_2\left(\HQ(\Z)\right)$. Similarly,
$\HQ\left(\Z\left[\zeta_p\right]\right)^*$ is subgroup separable if and only if so is $\SL_1\left(\HQ\left(\Z\left[\zeta_p\right]\right)\right)$.

A presentation by generators and relations for
$\SL_2\left(\HQ\left(\Z\right)\right)$ has been obtained in
\cite{ALM}. Unfortunately the subgroup separability question for
this groups does not seem to follow from the presentation. Note
that $\SL_2\left(\HQ\left(\Z\right)\right)$ does not posses the
congruence subgroup property, since it contains a subgroup of
finite index that maps onto a free non-abelian group. However, it
is not known whether failure of the congruence subgroup property
implies subgroup separability for arithmetic groups (virtually
indecomposable in direct products).

In  the remaining cases,
$\SL_1\left(\HQ\left(\Z\left[\zeta_p\right]\right)\right)$ with $p$ prime with $p\equiv -1 \mod
(8)$, the congruence subgroup property is unknown and the structure
of the group not-understood.

\bigskip

\end{document}